\newtheorem{theorem}{Theorem}[section]
\newtheorem{definition}[theorem]{Definition}
\newtheorem{lemma}[theorem]{Lemma}
\newtheorem{ex}[theorem]{Example}
\newtheorem{remark}[theorem]{Remark}
\numberwithin{equation}{section}
\newcommand{\h}{\mathfrak H}
\newcommand{\F}{ \mathcal{H}}
\newcommand{\Q}{\mathbb{Q}}
\newcommand{\R}{\mathbb{R}}
\newcommand{\I}{\mathbb{I}}
\newcommand{\id}{\operatorname{id}}
\title{Rooted tree maps and the derivation relation for multiple zeta values}
\author{Henrik Bachmann}
\address{Graduate School of Mathematics,  Nagoya University, Nagoya, Japan.}
\email{henrik.bachmann@math.nagoya-u.ac.jp}
\author{Tatsushi Tanaka}
\address{Department of Mathematics, Kyoto Sangyo University, Kyoto, Japan.}
\email{t.tanaka@cc.kyoto-su.ac.jp}
\subjclass[2010]{05C05, 05C25, 11M32, 16T05}
\keywords{Hopf algebra of rooted trees, noncommutative polynomial algebra, multiple zeta values, derivation, Dynkin operator}
\begin{document}
\maketitle

\begin{abstract} 
Rooted tree maps assign to an element of the Connes-Kreimer Hopf algebra of rooted trees a linear map on the noncommutative polynomial algebra in two letters. Evaluated at any admissible word these maps induce linear relations between multiple zeta values. In this note we show that the derivation relations for multiple zeta values are contained in this class of linear relations.
\end{abstract}

\section{Introduction}

In \cite{T} the second named author introduced rooted tree maps, which assign to a rooted forest a linear map on the space $\h = \Q\langle x,y \rangle$ of words in $x$ and $y$. One application of these maps is that any admissible word evaluated at a rooted forest map gives a $\Q$-linear relation between multiple zeta values. In \cite{IKZ} the authors introduced a derivation $\partial_n$ on $\h$ (with respect to the concatenation product), which also gives linear relation between multiple zeta values when evaluated at an admissible word. 

The purpose of this note is to show, that the derivation $\partial_n$ can be written as linear combination of rooted tree maps. In particular the derivation relation of multiple zeta values is a special case of the linear relations of multiple zeta values obtained in \cite{T}.

A rooted tree is as a finite graph which is connected, has no
cycles, and has a distinguished vertex called the root. We draw rooted trees with the root on top and we just consider rooted trees with no plane structure, which means that we for example do not distinguish between  $\begin{tikzpicture}[scale=0.25,baseline={([yshift=-.5ex]current bounding box.center)}]
\def\cz{5}
\def\wi{0.5}

\newcommand{\ci}[1]{	
	\fill[black] (#1) circle (\cz pt);
	\draw (#1) circle (\cz pt);
}

\coordinate (R) at (0,0);

\coordinate (r1) at (\wi,-1);
\coordinate (l1) at (-\wi,-1);

\coordinate (l11) at (-\wi-\wi,-2);
\coordinate (l12) at (-\wi+\wi,-2);

\draw (R) to (r1);
\draw (R) to (l1);

\draw (l1) to (l11);
\draw (l1) to (l12);

\ci{R}
\ci{r1}
\ci{l1}
\ci{l11}
\ci{l12}
\end{tikzpicture}$ and $\begin{tikzpicture}[scale=0.25,baseline={([yshift=-.5ex]current bounding box.center)}]
\def\cz{5}
\def\wi{0.5}

\newcommand{\ci}[1]{	
	\fill[black] (#1) circle (\cz pt);
	\draw (#1) circle (\cz pt);
}

\coordinate (R) at (0,0);

\coordinate (r1) at (\wi,-1);
\coordinate (l1) at (-\wi,-1);

\coordinate (r11) at (\wi-\wi,-2);
\coordinate (r12) at (\wi+\wi,-2);

\draw (R) to (r1);
\draw (R) to (l1);

\draw (r1) to (r11);
\draw (r1) to (r12);

\ci{R}
\ci{r1}
\ci{l1}
\ci{r11}
\ci{r12}
\end{tikzpicture}$. A product (given by the disjoint union) of rooted trees will be called a (rooted) forest and by $\F$ we denote the $\Q$-algebra of forests generated by all trees. The unit of $\F$, given by the empty forest, will be denoted by $\I$. Since we just consider trees without plane structure the algebra $\F$ is commutative.  Due to the work of Connes and Kreimer (\cite{CK}) the space $\F$ has the structure of a Hopf algebra. To define the coproduct on $\F$ we first define the linear map $B_+$ on $\F$, which connects all roots of the trees in a forest to a new root. For example we have $B_+\left(\begin{tikzpicture}[scale=0.3,baseline={([yshift=-.5ex]current bounding box.center)}]
\def\cz{5}
\def\wi{0.5}

\newcommand{\ci}[1]{	
	\fill[black] (#1) circle (\cz pt);
	\draw (#1) circle (\cz pt);
}

\coordinate (R) at (0,0);
\coordinate (r1) at (\wi,-1);
\coordinate (l1) at (-\wi,-1);

\draw (R) to (l1);
\draw (R) to (r1);

\ci{R};
\ci{l1};
\ci{r1};
\end{tikzpicture} \,\,\begin{tikzpicture}[scale=0.3,baseline={([yshift=-.5ex]current bounding box.center)}]
\def\cz{5}
\def\wi{0.5}

\newcommand{\ci}[1]{	
	\fill[black] (#1) circle (\cz pt);
	\draw (#1) circle (\cz pt);
}

\coordinate (R) at (0,0);
\ci{R}
\end{tikzpicture}\right) = $. Clearly for every tree $t \in \F$ there exists a unique forest $f_t \in \F$ with $t = B_+(f_t)$, which is just given by removing the root of $t$.
The coproduct on $\F$ can then be defined recursively for a tree $t \in \F$ by
\[ \Delta(t) = t \otimes \I + (\id \otimes B_{+}) \circ \Delta(f_t)\]
and for a forest $f=g h$ with $g,h \in \F$ multiplicatively by $\Delta(f) = \Delta(g)\Delta(h)$ and $\Delta(\I)= \I \otimes \I$. For example we have
\[\Delta() =  \otimes \I + \,  \otimes  + 2 \,  \otimes \begin{tikzpicture}[scale=0.3,baseline={([yshift=-.5ex]current bounding box.center)}]
\def\cz{5}
\def\wi{0.5}

\newcommand{\ci}[1]{	
	\fill[black] (#1) circle (\cz pt);
	\draw (#1) circle (\cz pt);
}

\coordinate (R) at (0,0);
\coordinate (r1) at (0,-1);

\draw (R) to (r1);

\ci{R};
\ci{r1};

\end{tikzpicture}+ \I\otimes \,.\]
 In \cite{T} the second named author uses the coproduct $\Delta$ to assign to a forest $f\in \F$ a $\Q$-linear map on the space $\h = \Q\langle x,y \rangle$, called a rooted tree map, by the following:
\begin{definition}\label{def:rtm} For any non-empty forest $f \in \F$, we define a $\Q$-linear map on $\h$, also denoted by $f$, recursively: For a word $w \in \h$ and a letter $u \in \{x,y\}$ we set
\[ f(w u):= M(\Delta(f)(w \otimes u))\,, \]
where $M(w_1 \otimes w_2) = w_1 w_2$ denotes the multiplication on $\h$. This reduces the calculation to $f(u)$ for a letter $u \in \{x,y\}$, which is defined by the following:
\begin{enumerate}[i)]
\item If $f=\,$, then $f(x) := xy$ and $f(y) := -xy$.
\item For a tree $t = B_+(f)$ we set $t(u) := R_y R_{x+2y}R_y^{-1} f(u)$,
where $R_v$ is the linear map given by $R_v(w)=wv$ ($v,w \in \h$).
\item If $f = gh$ is a forest with $g,h \neq \I$, then $f(u):=g(h(u))$.
\end{enumerate}
The rooted tree map of the empty forest $\I$ is given by the identity.
\end{definition}
In \cite[Theorem 1.1]{T} it is proven that this definition, in particular ii), is well-defined.  

Let $\h^0 = \Q + x\h y \subset \h$ be the subspace of admissible words and define the $\Q$-linear map $Z: \h^0 \rightarrow \R$ on a monomial $w=z_{k_1}\dots z_{k_r}$ with $z_k=x^{k-1}y \in \h$ ($k\geq 1$) by
\begin{equation} \label{eq:defmzv}
Z(w) = \zeta(k_1,\dots,k_r) = \sum_{m_1 > \dots > m_r > 0} \frac{1}{m_1^{k_1} \dots m_r^{k_r}} \,.
\end{equation}
The $\zeta(k_1,\dots,k_r)$ are called multiple zeta values and one particular interests in these real numbers is to understand their $\Q$-linear relations. Note that $z_{k_1}\dots z_{k_r} \in \h^0$ implies that $k_1 \geq 2$, $k_2,\dots,k_r\geq 1$, which ensures that the sum on the right of \eqref{eq:defmzv} converges. One application of the rooted tree maps is the following result in \cite{T}.
\begin{theorem}(\cite[Theorem 1.3]{T})\label{thm:rtmrelation} For any non-empty forest $f \in \F$ we have $$f(\h^0) \subset \ker Z.$$
\end{theorem}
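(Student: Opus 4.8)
The plan is to reduce from forests to single trees and then to treat trees by induction on the number of vertices, using the iterated-integral representation of $Z$ as the analytic input.

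First I would record the structural fact that makes the reduction work: the assignment $f\mapsto(\text{rooted tree map }f)$ is an algebra homomorphism $\F\to\operatorname{End}(\h)$, and it turns $\h$ into a module algebra over $\F$, i.e.\ $f(vw)=\sum f_{(1)}(v)\,f_{(2)}(w)$ for all words $v,w$ when $\Delta(f)=\sum f_{(1)}\otimes f_{(2)}$; both follow by unwinding Definition~\ref{def:rtm} and using coassociativity of $\Delta$. In particular, for $f=gh$ with $g,h\neq\I$ the operator $f$ equals $g\circ h$. Granting for the moment that every non-empty forest map sends $\h^0$ into $\h^0$, an induction on the number of connected components of $f$ then reduces the theorem to the single statement $t(\h^0)\subset\ker Z$ for every tree $t$. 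The granted assertion is itself a lemma I would prove first, by induction on the number of vertices: the single vertex $\bullet$ is, by $\Delta(\bullet)=\bullet\otimes\I+\I\otimes\bullet$, the derivation (for concatenation) with $\bullet(x)=xy$ and $\bullet(y)=-xy$, so $\bullet(\h^0)\subset\h^0$ and $\bullet(u)\in\h y$ for each letter $u$; the step uses $f(wu)=M(\Delta(f)(w\otimes u))$, the explicit form of the Connes--Kreimer coproduct, and, for a tree $t=B_+(f)$, the identity $t(u)=R_yR_{x+2y}R_y^{-1}f(u)$ --- which visibly outputs words ending in $y$ (so $R_y^{-1}$ stays legitimate) and keeps the leading letter $x$ once the argument is admissible.

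The core is $Z(t(w))=0$ for a tree $t=B_+(f)$ and $w\in\h^0$. Writing $Z(u_1\cdots u_n)=\int_{1>s_1>\cdots>s_n>0}\prod_i\omega_{u_i}(s_i)$ with $\omega_x=dt/t$ and $\omega_y=dt/(1-t)$, I would translate the operator $t$ into this language. Since $f(u)$ always ends in $y$, the conjugated shift $R_yR_{x+2y}R_y^{-1}$ amounts, on a letter, to replacing the trailing $\omega_y$ of $f(u)$ by $(\omega_x+2\omega_y)\,\omega_y$, i.e.\ to inserting the closed form $\omega_x+2\omega_y=d\log\frac{t}{(1-t)^2}$ immediately before the last form; on a word, the recursion through $\Delta(t)=t\otimes\I+(\id\otimes B_+)\Delta(f)$ prescribes exactly how these insertions are distributed among, and interleaved with, the remaining arguments. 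Carrying this bookkeeping through rewrites $Z(t(w))$ as a single iterated integral over a simplex, and I would finish by showing that, after the change of variables adapted to $d\log\frac{t}{(1-t)^2}$, the integrand is either exact with cancelling boundary terms (using admissibility of $w$) or antisymmetric under an interchange of two integration variables, so the integral vanishes. The outer induction is on the number of vertices; the base case $t=\bullet$ is the statement $Z(\partial_1(w))=0$ for admissible $w$, which on $w=z_k$ reads $\sum_{a=2}^{k}\zeta(a,k+1-a)=\zeta(k+1)$ (Euler's sum formula) and which holds for all admissible $w$.

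The main obstacle is precisely this translation-and-vanishing step: matching the purely Hopf-algebraic recursion that defines $t(w)$ --- the Connes--Kreimer coproduct together with the conjugated shift $R_yR_{x+2y}R_y^{-1}$ --- against the analytic side, and then exhibiting the cancellation. Everything before it (the module-algebra identity, the reduction to trees, the admissibility lemma) is bookkeeping. I expect the specific operator $R_yR_{x+2y}R_y^{-1}$, rather than a bare $R_{x+2y}$, to be exactly the shift for which this integral identity holds --- it is also the shift for which \cite[Theorem~1.1]{T} supplies well-definedness --- and pinning down \emph{why} it is the right one, while arranging the induction so the coproduct recursion meshes cleanly with the integral, is where I expect the real work to lie.
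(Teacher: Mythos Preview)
The present paper does not prove this statement; it is quoted from \cite[Theorem~1.3]{T} with no argument given here, so there is no proof in this paper to set your proposal against.

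On the proposal itself: the structural reductions you outline --- the module-algebra identity $f(vw)=M(\Delta(f)(v\otimes w))$, the admissibility lemma $f(\h^0)\subset\h^0$, and the passage from forests to trees via $f=g\circ h$ --- are correct and are indeed established in \cite{T}. The genuine gap is your core step. You propose to show $Z(t(w))=0$ by translating $t$ into the iterated-integral picture and then asserting that the integrand is ``either exact with cancelling boundary terms \dots\ or antisymmetric under an interchange of two integration variables'', but you do not say which alternative holds, you do not carry out the translation, and you do not exhibit the claimed symmetry or exact form. The Connes--Kreimer recursion $\Delta(t)=t\otimes\I+(\id\otimes B_+)\Delta(f_t)$ controls how $t$ acts on concatenations of words; it does not by itself produce a visible cancellation inside a single iterated integral, and no change of variables attached to $d\log\tfrac{s}{(1-s)^2}$ is known to yield one for a general tree. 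You concede this yourself (``where I expect the real work to lie''); as written, the decisive step is a hope rather than an argument. For orientation, the proof in \cite{T} is algebraic rather than analytic: it does not manipulate integrals at all, but instead produces an identity in $\h$ placing $f(w)$ inside a family of elements already known to lie in $\ker Z$, so the vanishing under $Z$ is inherited from previously established relations rather than derived from a direct integral computation.
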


\begin{ex}
For the tree $f=$ and the word $w=xy$ we obtain for $f(w)$
\begin{align*}
(xy) = M(\Delta()(x \otimes y))= M((x) \otimes y + (x) \otimes (y) +x \otimes (y)).
\end{align*}
Together with  $(x)=xy$ and $(x)=R_y R_{x+2y} R_y^{-1} (x)=x(x+2y)y$ we get
\[(xy) = 2xyyy - xyxy-xxxy-xxyy = 2z_2 z_1 z_1 - z_2 z_2 - z_4 - z_3 z_1 \,,\]
which by Theorem \ref{thm:rtmrelation} gives the linear relation $2\zeta(2,1,1)=\zeta(4)+\zeta(2,2)+\zeta(3,1)$.
\end{ex}
 In \cite{IKZ} the authors define the derivation $\partial_n$ on $\h$ by $\partial_n(x) = x(x+y)^{n-1}y$ and $\partial_n(y)=-x(x+y)^{n-1}y$. Also $\partial_n$ gives linear relation between multiple zeta values, since for all $n\geq 1$ we have $\partial_n(\h^0) \subset  \ker Z$ (\cite[Corollary 6]{IKZ}). These relations are known as the derivation relations for multiple zeta values. 
 
 The main result of this note is to show that the derivation relations are just a special case of Theorem \ref{thm:rtmrelation}, by giving an explicit description of $\partial_n$ in terms of rooted tree maps. For this just trees without any branches are needed, i.e. we will consider for $m \geq 1$ the ladder trees
 \[ \lambda_m = \begin{tikzpicture}[scale=0.3,baseline={([yshift=-.5ex]current bounding box.center)}]
\def\cz{5}
\def\wi{0.5}

\newcommand{\ci}[1]{	
	\fill[black] (#1) circle (\cz pt);
	\draw (#1) circle (\cz pt);
}

\coordinate (R) at (0,0);
\coordinate (r1) at (0,-1);
\coordinate (r11) at (0,-1.4);

\coordinate (r21) at (0,-2.6);
\coordinate (r2) at (0,-3);
\coordinate (r3) at (0,-4);

\draw (R) to (r1);

\draw (r1) to (r11);
\draw[densely dotted] (r11) to (r21);
\draw (r21) to (r2);
\draw (r2) to (r3);

\ci{R};
\ci{r1};
\ci{r2};
\ci{r3};

\draw[decoration={brace,mirror,raise=5pt},decorate]
  (r3) -- node[right=6pt] {$\footnotesize{m}$} (R);

\end{tikzpicture} \]
 
 and set $\lambda_0 = \I$.  With this the main result of this work is the following.
\begin{theorem}\label{thm:mainthm}For all $n \geq 1$ the derivation $\partial_n$ is given by
 \begin{equation}\label{eq:partialn}
  \partial_n = \frac{n}{2^n-1} \sum_{d=1}^n \frac{(-1)^{d+1}}{d} \sum_{\substack{m_1+\dots+m_d=n\\m_1,\dots,m_d \geq 1}}  \lambda_{m_1} \dots \lambda_{m_d}  \,.
  \end{equation}
\end{theorem}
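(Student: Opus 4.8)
The plan is to reduce everything to the action on a single letter $u\in\{x,y\}$. Indeed, both sides of \eqref{eq:partialn} are linear maps on $\h$, and I claim both are \emph{derivations} with respect to concatenation. For $\partial_n$ this is the definition. For the right-hand side this is not obvious, since a product $\lambda_{m_1}\cdots\lambda_{m_d}$ of rooted tree maps need not be a derivation; so the first, and I expect main, step is to show that the particular linear combination on the right of \eqref{eq:partialn} \emph{is} a derivation. Once that is known, it suffices to check that both sides agree on $x$ and on $y$; and since $\partial_n(y)=-\partial_n(x)$ and each rooted tree map $f$ satisfies $f(y)=-f(x)$ (which follows by induction from i) and ii), as $R_yR_{x+2y}R_y^{-1}$ commutes with the sign), it is in fact enough to compute the action on $x$ alone.

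To compute the action on $x$, I would introduce a generating function. For the ladder tree $\lambda_m=B_+(\lambda_{m-1})$, part ii) of Definition~\ref{def:rtm} gives $\lambda_m(x)=R_yR_{x+2y}R_y^{-1}\lambda_{m-1}(x)$ with $\lambda_0=\id$, hence $\lambda_m(x)$ is obtained from $x$ by applying the operator $\Phi:=R_yR_{x+2y}R_y^{-1}$ exactly $m$ times: $\lambda_m(x)=\Phi^m(x)$, and $\lambda_m(y)=-\Phi^m(x)$. Since rooted tree maps of forests compose, $\lambda_{m_1}\cdots\lambda_{m_d}(x)$ also reduces to iterated applications of $\Phi$ interleaved with the rule $f(wu)=M(\Delta(f)(w\otimes u))$; but on the single letter $x$ only the composition matters, giving a clean formula. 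Summing over compositions $m_1+\dots+m_d=n$ and over $d$ with the weights $\frac{(-1)^{d+1}}{d}$ produces, via the identity $\sum_{d\ge1}\frac{(-1)^{d+1}}{d}T^d=\log(1+T)$, the operator $\log\bigl(1+(\Psi-1)\bigr)$ acting on $x$, where $\Psi$ is the generating operator $\sum_{m\ge1}\Phi^m(x)\,(\cdot)$ packaging the ladders; the normalization $\frac{n}{2^n-1}$ should be exactly what is needed to match $\partial_n(x)=x(x+y)^{n-1}y$, the extra $2^n-1$ arising because $\Phi$ shifts by $x+2y$ rather than by $x+y$. Concretely, one expects $\Phi^m(x)=x(x+2y)^m y\cdot(\text{something})$ up to lower-order corrections, and the alternating sum in $d$ is precisely a M\"obius-type inversion converting powers of $(x+2y)$ into powers of $(x+y)$.

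The key steps, in order: (1) prove the right-hand side of \eqref{eq:partialn} is a concatenation derivation — I would do this by showing that the coproduct formula $f(wu)=M(\Delta(f)(w\otimes u))$ combined with the specific linear combination makes the "error terms" (the non-leading terms of $\Delta(\lambda_{m_1}\cdots\lambda_{m_d})$) cancel across the sum over compositions; (2) reduce to the action on $x$ using $f(y)=-f(x)$ for every rooted tree map; (3) compute $\lambda_m(x)$ explicitly and hence $\lambda_{m_1}\cdots\lambda_{m_d}(x)$; (4) perform the alternating sum over $d$ and compositions of $n$, recognizing it as a logarithm/M\"obius inversion, and verify it equals $\frac{2^n-1}{n}\,x(x+y)^{n-1}y=\frac{2^n-1}{n}\,\partial_n(x)$.

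The main obstacle is step (1): establishing that the combination is a derivation. The rooted tree maps are \emph{not} individually derivations, and the recursive definition through the full coproduct $\Delta(\lambda_m)$ (which involves all "lower" ladder-forests $\lambda_{m_1}\cdots\lambda_{m_k}$ appearing as admissible cuts of a ladder) makes the bookkeeping delicate. I would handle this by working out $\Delta(\lambda_m)=\sum_{k=0}^{m}\lambda_k\otimes\lambda_{m-k}$ explicitly (ladders have a particularly simple coproduct), feeding this into $f(wu)=M(\Delta(f)(w\otimes u))$, and then checking that the weighted sum telescopes so that $\partial_n(w_1w_2)=\partial_n(w_1)w_2+w_1\partial_n(w_2)$ holds. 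Alternatively — and this may be cleaner — I would first prove the formula on letters (steps (2)–(4)), then separately prove the right-hand side is a derivation, and conclude; the derivation property of the right-hand side can be bootstrapped from the derivation property of $\partial_n$ once equality on letters is known, provided one shows the right-hand side is determined by its values on $x,y$, which again comes down to understanding how $f(wu)=M(\Delta(f)(w\otimes u))$ propagates. This circularity is the crux, and resolving it cleanly — probably by a direct computation with the explicit ladder coproduct — is where the real work lies.
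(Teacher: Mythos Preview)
Your high-level strategy matches the paper's exactly: show that the right-hand side of \eqref{eq:partialn} is a derivation, then check agreement on the single letter $x$ (using $f(y)=-f(x)$ to get $y$ for free). But the execution diverges in two places.

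For step (1), the paper does not attempt a direct telescoping computation. Instead it recognises the combination $n\sum_{d}\frac{(-1)^{d+1}}{d}\sum_{m_1+\dots+m_d=n}\lambda_{m_1}\cdots\lambda_{m_d}$ as the Dynkin operator $D(\lambda_n)=(m\circ(S\otimes Y)\circ\Delta)(\lambda_n)$, and then appeals to the general fact (Patras--Reutenauer) that the Dynkin operator sends cocommutative elements to primitive ones; since $\Delta(\lambda_n)=\sum_j\lambda_j\otimes\lambda_{n-j}$ is symmetric, $\lambda_n$ is cocommutative, and primitivity of $D(\lambda_n)$ plus the rule $f(vw)=M(\Delta(f)(v\otimes w))$ gives the derivation property immediately. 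An essentially equivalent route, which you nearly articulate, is that $\Lambda_u:=\sum_{m\ge0}\lambda_m u^m$ is group-like (precisely because of the ladder coproduct formula you wrote down), hence $\log\Lambda_u$ is primitive and its $u^n$-coefficient is a derivation. Either way, this dissolves your ``crux'' without any bookkeeping of error terms.

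For steps (3)--(4), your claim that ``on the single letter $x$ only the composition matters, giving a clean formula'' for $\lambda_{m_1}\cdots\lambda_{m_d}(x)$ is where a genuine gap lies. After one application, $\lambda_{m_d}(x)=x(x+2y)^{m_d-1}y$ is a word of length $m_d+1$, and the next $\lambda_{m_{d-1}}$ acts through the full coproduct rule, not just through $\Phi$; for instance $\lambda_1^2(x)=xyy-xxy$ while $\lambda_2(x)=xxy+2xyy$, so there is no ``clean formula'' depending only on iterated $\Phi$. The paper sidesteps this entirely: it never computes $\lambda_{m_1}\cdots\lambda_{m_d}(x)$. Instead it computes only $\Lambda_u(x)=\sum_n\lambda_n(x)u^n=x+x\frac{u}{1-(x+2y)u}y$ (single ladders on $x$, which \emph{are} just iterated $\Phi$), and compares this to $\bigl(\Delta_{-2u}\circ\Delta_{-u}^{-1}\bigr)(x)$, which by \cite[Theorem~4]{IKZ} equals $\exp\bigl(\sum_n(2^n-1)\frac{\partial_n}{n}u^n\bigr)(x)$. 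Matching the two exponentials (both are automorphisms, being exponentials of derivations that kill $x+y$) gives $D(\lambda_n)=(2^n-1)\partial_n$ directly. Your M\"obius/logarithm intuition about converting $(x+2y)$ to $(x+y)$ is exactly right, but it is realised at the level of the automorphisms $\Delta_{-2u}\circ\Delta_{-u}^{-1}$, not by summing individual compositions.
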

Recall that by $\lambda_{m_1} \dots \lambda_{m_d}$ we denote the rooted tree maps corresponding to the forest of ladder trees with heights $m_1,\dots,m_d$. By Definition \ref{def:rtm} iii) we have $\lambda_{m_1} \lambda_{m_2} = \lambda_{m_2} \lambda_{m_1}$, so we get for the first few values of $n$  ($w \in \h$)
\begin{align*}\partial_1(w) = (w)\,,\quad \partial_2(w) = \frac{2}{3}  \,(w) - \frac{1}{3}\,\,(w) \,,\quad
\partial_3(w) = \frac{3}{7} \,\begin{tikzpicture}[scale=0.3,baseline={([yshift=-.5ex]current bounding box.center)}]
\def\cz{5}
\def\wi{0.5}

\newcommand{\ci}[1]{	
	\fill[black] (#1) circle (\cz pt);
	\draw (#1) circle (\cz pt);
}

\coordinate (R) at (0,0);
\coordinate (r1) at (0,-1);
\coordinate (r2) at (0,-2);

\draw (R) to (r1);
\draw (r1) to (r2);
\ci{R};
\ci{r1};
\ci{r2};
\end{tikzpicture}(w) -  \frac{3}{7} \,\,(w)+\frac{1}{7}\,\,(w)\,.
\end{align*}
Rewriting \eqref{eq:partialn} gives the following recursive formula for the rooted tree map $\lambda_n$
\begin{align*}
n \lambda_n = \sum_{j=1}^n \left(2^j -1\right) \lambda_{n-j} \partial_j \,.
\end{align*}

\subsection*{Acknowledgment}
The authors would like to thank the Max-Planck-Institut f\"ur Mathematik in Bonn for hospitality and support. The second author was also supported by Kyoto Sangyo University Research Grants.

\section{Dynkin operator for ladders}

By $S$ we denote the antipode of $\F$ and by $Y$ we denote the grading operator on $\F$ given by $Y(f)= \deg(f) f$. Here $\deg(f)$ is the degree of $f \in \F$ given by the number of vertices. To prove Theorem \ref{thm:mainthm} we define for all $f \in \F$ the Dynkin operator $D$ by
\[ D(f) := (m\circ(S \otimes Y)\circ\Delta)(f) \,,\]
where $m$ denotes the multiplication on $\F$.
First we give an explicit expression for $D(\lambda_n)$, then show that the associated rooted tree map $D(\lambda_n)$ is a derivation on $\h$ and finally prove $D(\lambda_n)(x)=(2^n-1) \partial_n(x)$. From this we obtain for all words $w\in \h$ the equality $D(\lambda_n)(w)=(2^n-1) \partial_n(w)$ and the identity in Theorem  \ref{thm:mainthm}.

\begin{lemma}\label{lem:identities}
For all $n\geq 1$ we have 
\begin{align}\label{eq:deltalambdan}
\Delta(\lambda_n) &= \sum_{j=0}^n \lambda_j \otimes \lambda_{n-j}\,,\\\label{eq:slambdan}
S(\lambda_n) &= \sum_{d=1}^{n} (-1)^d \sum_{\substack{m_1+\dots+m_d=n\\m_1,\dots,m_d \geq 1}} \lambda_{m_1} \dots \lambda_{m_d}\,,\\ \label{eq:dident}
D(\lambda_n) &= n\sum_{d=1}^n \frac{(-1)^{d+1}}{d} \sum_{\substack{m_1+\dots+m_d=n\\m_1,\dots,m_d \geq 1}} \lambda_{m_1} \dots \lambda_{m_d}\,.
\end{align}
\end{lemma}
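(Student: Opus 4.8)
The plan is to establish the three formulas in the order they are stated, since each one feeds into the next. I would begin with the coproduct identity \eqref{eq:deltalambdan}. The natural tool is the recursive definition of $\Delta$ together with the fact that $\lambda_n = B_+(\lambda_{n-1})$ for $n\geq 1$ and $\lambda_0=\I$. Proceeding by induction on $n$: the base case $n=1$ is the standard $\Delta(\lambda_1)=\lambda_1\otimes\I+\I\otimes\lambda_1$. For the inductive step, apply $\Delta(\lambda_n)=\lambda_n\otimes\I+(\id\otimes B_+)\circ\Delta(\lambda_{n-1})$, substitute the induction hypothesis $\Delta(\lambda_{n-1})=\sum_{j=0}^{n-1}\lambda_j\otimes\lambda_{n-1-j}$, and use $B_+(\lambda_{n-1-j})=\lambda_{n-j}$ on the right-hand tensor factors; collecting the leading term $\lambda_n\otimes\I=\lambda_n\otimes\lambda_0$ with the rest gives exactly $\sum_{j=0}^n\lambda_j\otimes\lambda_{n-j}$. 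This step is routine.

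Next I would derive \eqref{eq:slambdan}. The antipode on a connected graded Hopf algebra is determined by $S(\I)=\I$ and $m\circ(S\otimes\id)\circ\Delta=\eta\circ\varepsilon$, i.e. $S(\lambda_n)=-\sum_{j=0}^{n-1}S(\lambda_j)\lambda_{n-j}$ for $n\geq1$, where we have used \eqref{eq:deltalambdan} to expand the coproduct and that $\lambda_j\lambda_{n-j}$ is the forest product. One then checks that the claimed right-hand side of \eqref{eq:slambdan} — call it $\Sigma_n$, the sum over ordered compositions of $n$ with an alternating sign $(-1)^d$ — satisfies the same recursion. This is a standard compositional-inversion bookkeeping argument: splitting off the last part $m_d$ of a composition of $n$ into a composition of $n-m_d$ together with $m_d$, and tracking the sign change $(-1)^d=(-1)\cdot(-1)^{d-1}$, yields $\Sigma_n=-\sum_{m=1}^n\Sigma_{n-m}\lambda_m$ with $\Sigma_0=\I$, matching the antipode recursion; commutativity of $\F$ lets us write the product in either order. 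Equivalently, one can observe that \eqref{eq:deltalambdan} says the ladders form a divided-power–type sequence whose generating series $L(t)=\sum_{n\geq0}\lambda_n t^n$ is grouplike, so $S$ corresponds to $L(t)^{-1}=\bigl(1+\sum_{m\geq1}\lambda_m t^m\bigr)^{-1}=\sum_{d\geq0}(-1)^d\bigl(\sum_{m\geq1}\lambda_m t^m\bigr)^d$, which is \eqref{eq:slambdan} read off coefficientwise.

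Finally, \eqref{eq:dident} follows by combining the previous two with the definition $D=m\circ(S\otimes Y)\circ\Delta$. Applying $Y$ to the right tensor factor in \eqref{eq:deltalambdan} gives $Y(\lambda_{n-j})=(n-j)\lambda_{n-j}$, so $D(\lambda_n)=\sum_{j=0}^n (n-j)\,S(\lambda_j)\lambda_{n-j}$. Substituting \eqref{eq:slambdan} for each $S(\lambda_j)$ turns this into a sum over a composition of $j$ (with $d-1$ parts, say, and sign $(-1)^{d-1}$) followed by one more part $n-j$ of weight $n-j$, i.e. a sum over all ordered compositions $m_1+\dots+m_d=n$ weighted by $(-1)^{d-1}m_d$. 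The content of the identity is then the purely combinatorial fact that averaging $m_d$ over the cyclic rotations of a fixed composition replaces the factor $m_d$ by $n/d$; more precisely, $\sum_{\text{compositions}}(-1)^{d-1}m_d\,\lambda_{m_1}\cdots\lambda_{m_d}$ equals $\sum_{\text{compositions}}(-1)^{d-1}\frac{n}{d}\,\lambda_{m_1}\cdots\lambda_{m_d}$ because the rooted tree maps $\lambda_{m_i}$ commute (Definition \ref{def:rtm} iii)), so each cyclic class of compositions contributes its common forest $d$ times with the total weight $m_1+\dots+m_d=n$ distributed evenly. This cyclic-symmetry/commutativity argument is the one slightly delicate point — it is exactly the classical identity underlying the Dynkin–Specht–Wever idempotent — but once it is phrased in terms of the commuting maps $\lambda_m$ it is immediate, and it produces precisely the coefficient $n\cdot\frac{(-1)^{d+1}}{d}$ in \eqref{eq:dident}. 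I expect the main obstacle to be purely organizational: making sure the same convention for "composition'' (ordered, parts $\geq1$) and the same sign bookkeeping are used consistently across all three formulas so that the substitutions in the last step line up without off-by-one errors in $d$.
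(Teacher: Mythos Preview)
Your proposal is correct and follows essentially the same route as the paper: induction via $\Delta(\lambda_n)=\lambda_n\otimes\I+(\id\otimes B_+)\Delta(\lambda_{n-1})$ for \eqref{eq:deltalambdan}, the antipode recursion $S(\lambda_n)=-\sum_{j=0}^{n-1}S(\lambda_j)\lambda_{n-j}$ for \eqref{eq:slambdan}, and then the symmetrization replacing $m_d$ by $n/d$ for \eqref{eq:dident}. One small remark: the commutativity you need in the last step is that of the forest algebra $\F$ itself (the identities live in $\F$, not yet as maps on $\h$), so the relevant fact is commutativity of $\F$ rather than Definition~\ref{def:rtm}~iii).
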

\begin{proof}
The first formula follows inductively by definition of the coproduct, since $\Delta(\lambda_n) = \lambda_n \otimes \I + (\id \otimes B_{+}) \circ \Delta(\lambda_{n-1})$. The condition for $S$ being an antipode means in particular that $(m \circ (S \otimes \id) \circ \Delta)(\lambda_n) =0$. With $\eqref{eq:deltalambdan}$ this gives the condition $S(\lambda_n)=-\sum_{j=0}^{n-1} S(\lambda_j) \lambda_{n-j}$, from which \eqref{eq:slambdan} follows by induction on $n$. To prove the third statement we get by the first and second equation
\begin{align*}
D(\lambda_n) &= (m\circ(S \otimes Y)\circ\Delta)(\lambda_n) = (m\circ(S \otimes Y))\left(\sum_{j=0}^n \lambda_j \otimes \lambda_{n-j}\right)\\ &=\sum_{j=0}^{n}(n-j)S(\lambda_j)\lambda_{n-j} = \sum_{d=1}^n (-1)^{d+1} \sum_{\substack{m_1+\dots+m_d=n\\m_1,\dots,m_d \geq 1}} m_d \lambda_{m_1} \dots \lambda_{m_d}\,.
\end{align*}
Since the last sum is symmetric in the $m_\ast$, we can replace the factor $m_d$ by any other $m_j$ with $1 \leq j \leq d$. Averaging over all $d$ choices for $j$ gives \eqref{eq:dident}.
\end{proof}

\begin{lemma}\label{lem:derivation}
For all $n\geq 1$ the rooted tree map $D(\lambda_n)$ is a derivation on $\h$.
\end{lemma}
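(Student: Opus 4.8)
The plan is to show that $D(\lambda_n)$ satisfies the Leibniz rule $D(\lambda_n)(w_1 w_2) = D(\lambda_n)(w_1) w_2 + w_1 D(\lambda_n)(w_2)$ for all words $w_1,w_2 \in \h$, using the formula \eqref{eq:dident} for $D(\lambda_n)$ as a linear combination of ladder forests together with the defining recursion in Definition \ref{def:rtm}. First I would record the elementary fact that for any forest $f$, the rooted tree map $f$ satisfies $f(wu) = M(\Delta(f)(w \otimes u))$, and that this extends — by iterating the recursion one letter at a time — to a ``co-Leibniz'' type identity: writing $\Delta(f) = \sum f' \otimes f''$ in Sweedler notation, one gets $f(w_1 w_2) = \sum f'(w_1)\, f''(w_2)$ for \emph{all} words $w_1, w_2$, not just letters. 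This is the key structural input, and it follows by induction on the length of $w_2$ using coassociativity of $\Delta$; it is presumably already implicit in \cite{T}, so I would either cite it or give the one-line induction.

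Granting this, a forest map $f$ is a derivation on $\h$ precisely when $\Delta(f) = f \otimes \I + \I \otimes f$ \emph{after applying the representation}, i.e. when the ``non-primitive'' part of $\Delta(f)$ acts as zero — more precisely, $f$ is a derivation iff for the reduced coproduct $\widetilde\Delta(f) = \Delta(f) - f\otimes \I - \I \otimes f$ we have $\sum_{(f)} \widetilde f'(w_1) \widetilde f''(w_2) = 0$ for all $w_1, w_2$. Now by \eqref{eq:deltalambdan} we have $\Delta(\lambda_n) = \sum_{j=0}^n \lambda_j \otimes \lambda_{n-j}$, so $\lambda_n$ itself is very far from primitive; the point of the Dynkin operator is that the particular combination $D(\lambda_n) = m\circ(S\otimes Y)\circ\Delta(\lambda_n)$ \emph{is} primitive in the relevant sense. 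The cleanest route is to compute $\Delta\bigl(D(\lambda_n)\bigr)$ directly in $\F$ and show it equals $D(\lambda_n)\otimes\I + \I\otimes D(\lambda_n)$ — i.e. that $D(\lambda_n)$ is a primitive element of the Hopf algebra generated by the ladders. Since the $\lambda_n$ span a \emph{sub-Hopf-algebra} of $\F$ (by \eqref{eq:deltalambdan} it is closed under $\Delta$, and by \eqref{eq:slambdan} under $S$) which is isomorphic to the Hopf algebra of a divided-power sequence, and $D = m\circ(S\otimes Y)\circ\Delta$ is the classical Dynkin/Eulerian idempotent-type operator, it is a standard fact that $D$ lands in the primitives. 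I would verify this by a short computation: apply $(\Delta\otimes\id)$ and $(\id\otimes\Delta)$, use that $\Delta$ is an algebra map, $S$ is an anti-coalgebra map, and $Y$ is a derivation-like grading operator satisfying $\Delta\circ Y = (Y\otimes\id + \id\otimes Y)\circ\Delta$, to get $\Delta\circ D = (D\otimes\id + \id\otimes D)\circ(\text{something})$ and then conclude primitivity on the connected graded piece.

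Once $\Delta(D(\lambda_n)) = D(\lambda_n)\otimes \I + \I \otimes D(\lambda_n)$ is established in $\F$, the derivation property of the associated rooted tree map on $\h$ is immediate from the ``co-Leibniz'' identity above: for all $w_1,w_2$,
\[
D(\lambda_n)(w_1 w_2) = \sum_{(D(\lambda_n))} D(\lambda_n)'(w_1)\, D(\lambda_n)''(w_2) = D(\lambda_n)(w_1)\, w_2 + w_1\, D(\lambda_n)(w_2),
\]
using that $\I$ acts as the identity. The main obstacle is the first paragraph's structural claim — promoting the one-letter recursion of Definition \ref{def:rtm} to a genuine coproduct-compatible identity on arbitrary words, which requires care with the order of concatenation and an honest induction using coassociativity (the maps $R_v$ and $R_y^{-1}$ in part ii) make the bookkeeping slightly delicate, though for ladders the coproduct \eqref{eq:deltalambdan} is simple enough that this should go through cleanly). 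Everything after that is standard Hopf-algebra manipulation with the explicit formulas from Lemma \ref{lem:identities}.
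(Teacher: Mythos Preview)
Your proposal is correct and follows essentially the same route as the paper: extend the one-letter recursion to $f(vw)=M(\Delta(f)(v\otimes w))$ for arbitrary words (the paper cites this as \cite[Proposition~3.8]{T}), observe that a primitive $f$ then acts as a derivation, and show that $D(\lambda_n)$ is primitive in $\F$. The only substantive difference is in the last step: where you propose a direct computation of $\Delta(D(\lambda_n))$ inside the ladder sub-Hopf-algebra, the paper instead notes that $\lambda_n$ is cocommutative (immediate from \eqref{eq:deltalambdan}) and invokes \cite[Theorem~5]{PR}, which says the Dynkin operator sends cocommutative elements to primitives --- this is exactly the ``standard fact'' you allude to, and naming cocommutativity as the hypothesis is what makes your sketched computation go through.
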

\begin{proof}
By Proposition 3.8 in \cite{T} we have for any words $v,w \in \h$ and any forest $f \in \F$, $f(vw) =  M(\Delta(f)(v\otimes w))$. In the definition of rooted tree maps this is just given for the case $w$ being a letter, but it can be proven inductively on the number of letters of the word $vw$.
We need to show that $(m\circ(S \otimes Y)\circ\Delta)(\lambda_n)$ is primitive, since for a primitive element $f \in \F$, we get
\[f(vw) =  M(\Delta(f)(v\otimes w)) = M(f(v) \otimes w + v \otimes f(w)) = f(v)w + vf(w) \,.\]
In \cite[Theorem 5]{PR} it is shown that the Dynkin operator $m\circ(S \otimes Y)\circ\Delta$ applied to a cocommutative element gives an primitive element. By \eqref{eq:deltalambdan} the $\lambda_n$ are cocommutative from which the statement follows. 
\end{proof}

\begin{lemma} \label{lem:xequ}
For all $n \geq 1$ we have $D(\lambda_n)(x)=(2^n-1) \partial_n(x)$.
\end{lemma}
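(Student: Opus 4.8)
The plan is to reduce everything to a computation with the single letter $x$, using the recursive definition of the rooted tree maps $\lambda_m$ together with the explicit formula \eqref{eq:dident} for $D(\lambda_n)$. First I would record how a ladder acts on the letter $x$. By Definition \ref{def:rtm} i) we have $\lambda_1(x) = xy$, and by ii) the map $\lambda_m$ satisfies $\lambda_m(u) = R_y R_{x+2y} R_y^{-1}\lambda_{m-1}(u)$ for $m\geq 2$, since $\lambda_m = B_+(\lambda_{m-1})$. Thus $\lambda_m(x)$ is obtained from $xy$ by applying $R_yR_{x+2y}R_y^{-1}$ exactly $m-1$ times; concretely $\lambda_m(x) = x(x+2y)^{m-1}y$. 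More generally, for a forest $\lambda_{m_1}\cdots\lambda_{m_d}$ I would compute the composite $\lambda_{m_1}(\cdots\lambda_{m_d}(x)\cdots)$. Since each $\lambda_{m_i}$ acting on a word $w$ is computed via $\Delta(\lambda_{m_i})$ and the formula $\lambda_{m_i}(vu) = M(\Delta(\lambda_{m_i})(v\otimes u))$, and since by \eqref{eq:deltalambdan} the coproduct of a ladder is $\sum_j \lambda_j\otimes\lambda_{n-j}$, the action of $\lambda_{m_i}$ on a word ending in $y$ unwinds cleanly: on $x(x+2y)^{a}y$ one gets a sum over how the ladder distributes between the ``body'' $x(x+2y)^a$ and the final letter $y$.

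The cleanest route is to pass to the generating function. I would introduce, for a formal variable $T$, the sum $L(T) := \sum_{m\ge 0} \lambda_m T^m$ and its action on $x$. Using $\lambda_m(x)=x(x+2y)^{m-1}y$ for $m\ge1$ and $\lambda_0=\id$, one finds $\sum_{m\ge1}\lambda_m(x)T^m = x\cdot\frac{T}{1-(x+2y)T}\cdot y$ in the obvious completed sense. Then \eqref{eq:dident} expresses $D(\lambda_n)$ as $n$ times the degree-$n$ part of $-\log\bigl(1 - \sum_{m\ge1}\lambda_m T^m\bigr)$ evaluated appropriately (the inner sum $\sum_d \frac{(-1)^{d+1}}{d}(\cdots)$ is exactly the logarithm of $1+\sum_{m\ge1}\lambda_mT^m$, i.e.\ of $L(T)$, with the composition of ladder maps playing the role of multiplication). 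So I would compute $\log L(T)$ acting on $x$. The key simplification is that, when restricted to the letter $x$, composition of ladder maps becomes tractable because $\lambda_j(x(x+2y)^a y)$ only depends on how $\lambda_j$ splits into a piece acting on the trailing $y$ and a piece $R_yR_{x+2y}R_y^{-1}$-iterated on the prefix; carrying this through should turn $L(T)$ acting on $x$ into a rational function of $T$ and the right-multiplication operators, whose logarithm I can read off.

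Concretely, I expect $L(T)(x)$ to come out proportional to $x\,\frac{1}{1-\alpha(x+2y)T}\cdot(\text{something})\,y$ with the upshot that $\log L(T)$ applied to $x$ equals $\sum_{n\ge1}\frac{2^n-1}{n}\,x(x+y)^{n-1}y\,T^n$, whence $D(\lambda_n)(x) = n\cdot\frac{2^n-1}{n}x(x+y)^{n-1}y = (2^n-1)\partial_n(x)$, using $\partial_n(x)=x(x+y)^{n-1}y$. The main obstacle is the middle step: verifying that the composition of the ladder maps on words of the special form $x(x+2y)^{a}y$ behaves like an honest product of rational expressions, so that the alternating-logarithm combination in \eqref{eq:dident} collapses. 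This is where the interplay between $R_yR_{x+2y}R_y^{-1}$ and the coproduct splitting is essential, and where the constant $2^n-1$ must emerge — presumably from evaluating a geometric-type series at a point that records the ``$x+2y$ versus $x+y$'' discrepancy, e.g.\ from an identity like $\sum_{a\ge0}2^a = $ the relevant specialization. Once the $x$-identity is in hand, combined with Lemma \ref{lem:derivation} (that $D(\lambda_n)$ is a derivation, matching $\partial_n$ which is a derivation) and the fact that a derivation on $\h=\Q\langle x,y\rangle$ is determined by its values on $x$ and $y$ together with $D(\lambda_n)(y) = -D(\lambda_n)(x)$ by the sign symmetry built into Definition \ref{def:rtm} i) (mirroring $\partial_n(y)=-\partial_n(x)$), the full statement $D(\lambda_n)(w)=(2^n-1)\partial_n(w)$ for all $w$, and hence Theorem \ref{thm:mainthm}, follows.
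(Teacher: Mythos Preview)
Your setup is exactly right and matches the paper: you correctly compute $\lambda_m(x)=x(x+2y)^{m-1}y$, introduce the generating operator $L(T)=\sum_{m\ge 0}\lambda_m T^m$ (the paper calls it $\Lambda_u$), observe via \eqref{eq:dident} that $\log L(T)=\sum_{n\ge1}\frac{D(\lambda_n)}{n}T^n$, and compute $L(T)(x)=x+x\frac{T}{1-(x+2y)T}y$. All of this is in the paper's proof.

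The gap is precisely at the step you flag as ``the main obstacle.'' You propose to evaluate $(\log L(T))(x)$ by understanding iterated compositions $\lambda_{m_1}(\lambda_{m_2}(\cdots\lambda_{m_d}(x)\cdots))$ on words of the form $x(x+2y)^a y$, hoping these behave like products of rational expressions so that the alternating sum collapses. But you do not carry this out, and it is not clear how it would go: already $\lambda_{m}(x(x+2y)^a y)$ requires the full coproduct recursion on a long word, and there is no evident closed form that would make the logarithm of the operator tractable in this way. Your expectation that the answer is $\sum_{n\ge1}\frac{2^n-1}{n}x(x+y)^{n-1}y\,T^n$ is correct, but the mechanism you sketch for producing the factor $2^n-1$ (``a geometric-type series at a point that records the $x+2y$ versus $x+y$ discrepancy'') is not an argument.

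The paper closes this gap by going the other way around. Rather than computing $(\log L(T))(x)$ directly, it computes $\exp\bigl(\sum_{n\ge1}(2^n-1)\frac{\partial_n}{n}u^n\bigr)(x)$ by recognizing this operator as $\Delta_{-2u}\circ\Delta_{-u}^{-1}$, where $\Delta_u$ is the explicit automorphism of $\h[[u]]$ from \cite{IKZ} satisfying $\Delta_u=\exp\bigl(\sum_{n\ge1}(-1)^n\frac{\partial_n}{n}u^n\bigr)$. A short direct calculation with $\Delta_u(x)=x(1+yu)^{-1}$, $\Delta_u(y)=y+x(1+yu)^{-1}$ then gives $(\Delta_{-2u}\circ\Delta_{-u}^{-1})(x)=x+x\frac{u}{1-(x+2y)u}y$, which is exactly $\Lambda_u(x)$. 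Matching the two exponentials yields $D(\lambda_n)(x)=(2^n-1)\partial_n(x)$. The appearance of $2^n-1$ is thus explained not by a combinatorial collapse of ladder compositions, but by the substitution $u\mapsto -2u$ versus $u\mapsto -u$ in the IKZ generating automorphism. This is the key idea your proposal is missing.

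Your final paragraph (extending from $x$ to all of $\h$ via the derivation property and the symmetry $\delta(y)=-\delta(x)$) is correct, but note that this is the content of the proof of Theorem~\ref{thm:mainthm}, not of Lemma~\ref{lem:xequ} itself.
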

\begin{proof}
Let $\h[[u]]$ be the formal power series ring over $\h$ with indeterminate $u$ and let $\Delta_{u}$  be the automorphism of $\h[[u]]$ whose images on the generators is given by $\Delta_u(u)=u$,  $\Delta_u(x) = x (1+yu)^{-1}$ and $\Delta_u(y) = y + x(1+yu)^{-1}$.
By direct calculation one checks that 
\begin{equation}\label{eq:deltaident}
\left( \Delta_{-2u} \circ \Delta^{-1}_{-u}\right)(x) = x +  x \frac{u}{1-(x+2y)u} y \,.
\end{equation} 
In \cite[Theorem 4]{IKZ} it is proven that \[\Delta_u = \exp\left( \sum_{n=1}^\infty (-1)^{n}\frac{\partial_n}{n} u^n \right)\,,\]
which together with \eqref{eq:deltaident} gives
\begin{equation}\label{eq:expident}
\exp\left( \sum_{n=1}^\infty (2^{n}-1)\frac{\partial_n(x)}{n} u^n \right)= x +  x \frac{u}{1-(x+2y)u} y\,.
\end{equation}
Now define on $\h[[u]]$ the automorphism $\Lambda_u = \sum_{n\geq 0} \lambda_n u^n$ and calculate
\begin{align*}
\log(\Lambda_u) &= \sum_{d\geq 1} \frac{(-1)^{d+1}}{d} \left( \sum_{n\geq 1} \lambda_n u^n\right)^d =\sum_{n\geq 1} \sum_{d\geq 1} \frac{(-1)^{d+1}}{d}\sum_{\substack{m_1+\dots+m_d=n\\m_1,\dots,m_d \geq 1}} \lambda_{m_1} \dots \lambda_{m_d} u^n\,.
\end{align*}
By Lemma \ref{lem:identities}  this gives $\log(\Lambda_u) = \sum_{n\geq 1} \frac{D(\lambda_n)}{n} u^n$. 
The definition of rooted tree maps implies $\lambda_n(x) = R_y R_{x+2y} R_y^{-1} \lambda_{n-1}(x) = \dots = R_y R_{x+2y}^{n-1} R_y^{-1} \lambda_{1}(x) = x (x+2y)^{n-1} y$. Therefore the image of $x$ under $\Lambda_u$ is given by
\[\Lambda_u(x) = \sum_{n\geq 0} \lambda_n(x) u^n =x +
 \sum_{n\geq 1} x(x+2y)^{n-1}y u^n= x +  x \frac{u}{1-(x+2y)u} y\,. \]
 Since this equals $\exp(\sum_{n\geq 1} \frac{D(\lambda_n)(x)}{n} u^n)$ we obtain the desired identity by \eqref{eq:expident}.
\end{proof}

\begin{proof}[Proof of Theorem \ref{thm:mainthm}] By Lemma \ref{lem:derivation} the rooted tree map $D(\lambda_n)$ is a derivation on $\h$, which satisfies (as every rooted tree map) $D(\lambda_n)(y) = -D(\lambda_n)(x)$. By Lemma \ref{lem:xequ} and because of $\partial_n(y)=-\partial_n(x)$ the derivations  $D(\lambda_n)$ and $(2^n-1) \partial_n$ are the same on the generators of $\h$ and hence they are equal. The explicit formula for $\partial_n$ in \eqref{eq:partialn} now follows from \eqref{eq:dident} in Lemma \ref{lem:identities}.\end{proof}

\begin{remark}
As seen in Lemma \ref{lem:derivation} the Dynkin operator of any cocommutative element in $\F$ gives a derivation on $\h$. A natural question therefore is, if there are other cocommutative elements in $\F$, which gives rise to derivations on $\h$. For example the element $\lambda_1^n$ is cocommutative, since $\Delta(\lambda_1^n)=\sum_{j=0}^n \binom{n}{j} \lambda_1^j \otimes \lambda_1^{n-j}$. But with $S(\lambda_1^n)=(-1)^n \lambda_1^n$ one checks that $D(\lambda_1^n)=\sum_{j=1}^n \binom{n}{j}(-1)^{j}(n-j) \lambda_1^n =0$ for $n\geq 2$, which does not give an interesting example of a derivation.
\end{remark}


\end{document}